\newtheorem{Theorem}{Theorem}[section]
\newtheorem{definition}[Theorem]{Definition}
\newtheorem{lemma}[Theorem]{Lemma}
\newtheorem{Proposition}[Theorem]{Proposition}
\begin{document}
\thanks{}

\address [Yutao Liu] {School of Mathematics (Zhuhai), Sun Yat-Sen University, Zhuhai, Guangdong,  519082, China}
\email{liuyt88@mail2.sysu.edu.cn}

\address [Jujie Wu] {School of Mathematics (Zhuhai), Sun Yat-Sen University, Zhuhai, Guangdong,  519082, China}
\email{wujj86@mail.sysu.edu.cn}

\address [Yuanpu Xiong] {School of Mathematical Sciences, Tongji University, Shanghai, 200092, China}
\email{ypxiong@tongji.edu.cn}

\title{The Grothendieck Theorem in Bergman Spaces }
\author{Yutao Liu, Jujie Wu and Yuanpu Xiong}

\date{}
\begin{abstract}  
	In this paper, we prove that if $E$ is a closed subspace of the holomorphic $L^p$-integrable space and is also contained in the holomorphic $L^q$-integrable space, for any $p > 1$ and any $q > p$, then the dimension of $E$ must be finite.
	

	\bigskip
	\noindent{{\sc Mathematics Subject Classification} (2020): 32A36, 32A70  }
	
	\smallskip
	
	\noindent{ {\sc Keywords}: $p$-Bergman space, compact operator} 
\end{abstract}

\thanks{The Second author is supported by the National Key R\&D Program of China,  No.  2024YFA1015200 and the Natural Science Foundation of Guangdong Province,  No.  2025A1515011428.}

\maketitle
\tableofcontents
\section{Introduction}
In 1954, Grothendieck \cite{Gro} proved the following theorem (see also \cite{Stein}, Chapter 4, Theorem 4.2 and \cite{rudin1991functional}, Theorem 5.2).
\begin{Theorem}\label{th:Gro}
Let $(X,\mu)$ be a finite measure space and $1\leq p<\infty$. If $E$ is a closed subspace of $L^p(X,\mu)$ and $E\subset{L^\infty}(X,\mu)$, then $\dim{E}<\infty$.
\end{Theorem}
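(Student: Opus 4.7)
The plan is to establish Theorem \ref{th:Gro} in three stages: (i) promote the set-theoretic inclusion $E \subset L^\infty$ to a norm inequality via the closed graph theorem; (ii) transfer the problem to an $L^2$ (Hilbertian) setting where orthonormal systems are available; and (iii) exploit a pointwise estimate on orthonormal systems to bound $\dim E$.

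For step (i), since $E$ is closed in $L^p(X,\mu)$ it is a Banach space under $\norm{\cdot}_p$. I would verify that the inclusion $j:E\to L^\infty(X,\mu)$ has closed graph: if $f_n\to f$ in $L^p$ and $f_n\to g$ in $L^\infty$, then a subsequence of $(f_n)$ converges a.e.\ to $f$, while $L^\infty$ convergence forces a.e.\ convergence to $g$, so $f=g$ a.e. The closed graph theorem then yields a constant $C>0$ with $\norm{f}_\infty\le C\norm{f}_p$ for every $f\in E$. For step (ii), a short interpolation argument using $\mu(X)<\infty$ together with the estimate from step (i) shows that $\norm{\cdot}_p$ and $\norm{\cdot}_2$ are equivalent on $E$, and supplies a constant $C'>0$ with $\norm{f}_\infty\le C'\norm{f}_2$ for all $f\in E$; in particular $E$ inherits a Hilbert-space structure from $L^2(X,\mu)$.

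For step (iii), to bound $\dim E$ it suffices to bound the cardinality $n$ of an arbitrary $L^2$-orthonormal family $\{e_1,\dots,e_n\}\subset E$. For each $a=(a_1,\dots,a_n)\in\C^n$ the combination $\sum_i a_i e_i$ lies in $E$, so
\[
\Bigl\|\sum_{i=1}^n a_i e_i\Bigr\|_\infty \le C' \Bigl(\sum_{i=1}^n \abs{a_i}^2\Bigr)^{1/2}.
\]
Taking the union of the exceptional null sets as $a$ ranges over a countable dense subset $D\subset\C^n$, I would show that outside a single null set $N$ one has, by continuity,
\[
\Bigl|\sum_{i=1}^n a_i e_i(x)\Bigr|^2 \le (C')^2 \sum_{i=1}^n \abs{a_i}^2 \quad \text{for every } a\in\C^n.
\]
Testing against $a_i=\overline{e_i(x)}$ yields the pointwise bound $\sum_i \abs{e_i(x)}^2\le (C')^2$ a.e., and integrating over $X$ produces $n=\sum_i\norm{e_i}_2^2\le (C')^2\mu(X)$, a bound independent of the orthonormal system.

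The main obstacle is the uniform-in-$a$ pointwise estimate in step (iii): the closed graph estimate only excludes an $a$-dependent null set, so one must carefully pass from a dense countable set of coefficient vectors to all of $\C^n$ via continuity of $a\mapsto \sum a_i e_i(x)$ for fixed $x\notin N$. The remainder of the argument is routine bookkeeping based on the closed graph theorem and H\"older-type inequalities.
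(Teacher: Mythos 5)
Your proposal is correct, and it is essentially the classical argument of Grothendieck as presented in Rudin's \emph{Functional Analysis} (Theorem 5.2), which is one of the sources the paper cites for this statement. Note that the paper does not actually prove Theorem \ref{th:Gro}: it quotes it from the literature, and the proof environment labeled ``Proof of Theorem \ref{th:Gro}'' at the end of Section 3 is a mislabeling --- its content (the factorization of the inclusion through $A^q(\Omega)$ and the compactness of $\tau$) is the proof of Theorem \ref{th:Gro1}. The comparison between your route and the paper's is still worth recording. You use the closed graph theorem to obtain $\norm{f}_\infty\le C\norm{f}_p$ on $E$, upgrade this to $\norm{f}_\infty\le C'\norm{f}_2$ by interpolating against the finite measure (the cases $p\le 2$ and $p>2$ need slightly different but equally routine manipulations), and then bound the cardinality $n$ of any $L^2$-orthonormal family in $E$ via the pointwise estimate $\sum_i\abs{e_i(x)}^2\le (C')^2$ a.e.; the care you take in passing from a countable dense set of coefficient vectors to all of $\C^n$ before testing with $a_i=\overline{e_i(x)}$ is precisely where careless versions of this argument fail, and you handle it correctly. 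Only then does completeness of $E$ matter not at all: finite orthonormal families suffice to bound the dimension. By contrast, the paper's strategy for its own main theorem --- prove that the inclusion of $E$ into $L^p$ is compact and invoke Riesz's theorem (Theorem \ref{functional analysis}) --- cannot be transplanted to Theorem \ref{th:Gro}: the embedding $L^\infty(X,\mu)\hookrightarrow L^p(X,\mu)$ is not compact for a general finite measure space (the Rademacher functions on $[0,1]$ give a bounded sequence in $L^\infty$ with no $L^p$-convergent subsequence), and the paper's compactness proof relies essentially on holomorphy through Montel's theorem and the Bergman inequality. So your argument is the right tool for the abstract measure-theoretic statement, while the paper's operator-theoretic argument is tailored to the Bergman-space setting where compactness genuinely holds.
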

It is notable that the space $L^\infty(X,\mu)$ in the theorem cannot be replaced by $L^q(X,\mu)$ with $q>p$ (see \cite{rudin1991functional}, Theorem 5.3).

In this paper, we restrict our attention to Bergman spaces
\[
A^p(\Omega):=\mathcal{O}(\Omega)\cap{L^p(\Omega)},\ \ \ p\geq1,
\]
where $\Omega$ is a bounded domain in $\mathbb{C}^n$ with the Lebesgue measure. It follows that $A^p(\Omega)$ is a closed subspace of $L^p(\Omega)$, and hence it is a Banach space (see \S\,\ref{sec:preliminary} for more details). In sharp contrast to Theorem \ref{th:Gro}, we have the following

\begin{Theorem}\label{th:Gro1}
For any bounded domain $\Omega\subset\mathbb{C}^n$, if $1\leq p<q\leq\infty$, $E$ is a closed subspace of $A^p(\Omega)$ and $E\subset{A^q(\Omega)}$, then $\dim{E}<\infty$.
\end{Theorem}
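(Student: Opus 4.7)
My plan is to show that the closed unit ball of $E$ (in the $\|\cdot\|_p$ norm) is compact, which by F.~Riesz's classical compactness criterion forces $\dim E<\infty$. The argument proceeds in three logically separate steps: a closed graph application to get a norm comparison, Montel's theorem to extract a pointwise limit, and a uniform integrability argument to upgrade pointwise convergence to $L^p$ convergence.

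First, I will apply the closed graph theorem to the inclusion $E\hookrightarrow A^q(\Omega)$. Both spaces are Banach, and since convergence in either $L^p$ or $L^q$ on the finite-measure set $\Omega$ implies a.e.\ convergence of a subsequence, any sequence $f_k\to f$ in $A^p$ with $f_k\to g$ in $A^q$ must satisfy $f=g$ a.e.; hence the graph is closed, and there exists $C>0$ with $\|f\|_q\leq C\|f\|_p$ for every $f\in E$.

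Second, I will combine this bound with the sub-mean value property of the plurisubharmonic function $|f|^q$: for any compact $K\Subset\Omega$ there is $C_K>0$ such that $\sup_K|f|\leq C_K\|f\|_q\leq CC_K\|f\|_p$. Thus the unit ball $B_E=\{f\in E:\|f\|_p\leq 1\}$ is locally uniformly bounded on $\Omega$, and Montel's theorem guarantees that every sequence $(f_k)\subset B_E$ has a subsequence converging locally uniformly on $\Omega$ to some holomorphic $f$; by Fatou's lemma, $f\in A^q(\Omega)\cap A^p(\Omega)$.

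Finally, and this is the essential use of the strict inequality $q>p$, I will upgrade local uniform convergence to $L^p$ convergence via uniform integrability. For any measurable $A\subset\Omega$, Hölder's inequality gives $\int_A|f_k|^p\,dV\leq |A|^{1-p/q}\|f_k\|_q^p\leq C^p|A|^{1-p/q}$, with an analogous estimate for $f$, so the family $\{|f_k-f|^p\}$ is uniformly integrable. Since $\Omega$ has finite Lebesgue measure and $f_k\to f$ a.e., Vitali's convergence theorem then yields $f_k\to f$ in $L^p(\Omega)$; because $E$ is closed in $A^p(\Omega)$, $f\in E$, and $B_E$ is sequentially compact. For $q=\infty$ the same scheme works with the uniform integrability step replaced by dominated convergence. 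The main obstacle I anticipate is precisely this third step: the closed graph inequality alone only yields locally uniform convergence of subsequences, not norm convergence on all of $\Omega$, and the Hölder plus Vitali combination is exactly what lets the strict gap $q>p$ control the behavior near $\partial\Omega$.
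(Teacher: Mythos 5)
Your proposal is correct and follows essentially the same route as the paper: a closed graph argument yielding $\|f\|_q\le C\|f\|_p$ on $E$, the Bergman (sub-mean value) inequality plus Montel to extract a locally uniformly convergent subsequence from the unit ball, and H\"older plus Vitali to upgrade to $L^p$ convergence, concluding via Riesz's criterion. The only cosmetic difference is that the paper packages the middle two steps as the compactness of the inclusion $\tau:A^q(\Omega)\to A^p(\Omega)$ and handles $q=\infty$ by citing Grothendieck's theorem directly, whereas you argue on the unit ball of $E$ throughout.
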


Since $\Omega$ is of finite measure, the case that $q=\infty$ is a straightforward consequence of Theorem \ref{th:Gro}. Indeed, if $E$ is a closed subspace of $A^p(\Omega)$, then it is also closed in $L^p(\Omega)$. As $E\subset A^\infty(\Omega)\subset L^\infty(\Omega)$, Theorem \ref{th:Gro} applies. In what follows, we shall only consider the case $q<\infty$.

Since $A^q(\Omega)\subset A^p(\Omega)$ in view of H\"{o}lder's inequality, we have a natural inclusion
\[
\tau:A^q(\Omega)\rightarrow A^p(\Omega),\qquad \tau(f)=f.
\]
Our proof of Theorem \ref{th:Gro1} is based on the following observation, which might be of independent interest.

\begin{Theorem}\label{th:compact}
For any $1\leq p<q<\infty$, $\tau$ is a compact operator.
\end{Theorem}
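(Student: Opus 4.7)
The plan is to show that the natural inclusion $\tau : A^q(\Omega) \to A^p(\Omega)$ takes bounded sequences to sequences with norm-convergent subsequences. Concretely, fix a sequence $\{f_k\} \subset A^q(\Omega)$ with $\|f_k\|_{A^q(\Omega)} \leq M$. I want to extract a subsequence that converges in $A^p(\Omega)$.

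The first step is to pass from $A^q$-boundedness to a local sup-bound. By the submean-value inequality for $|f_k|^q$ applied on a small ball around any point $z \in \Omega$ of radius comparable to $\mathrm{dist}(z,\partial\Omega)$, one obtains an estimate of the form
\[
|f_k(z)| \leq \frac{C_n}{\mathrm{dist}(z,\partial\Omega)^{2n/q}} \, \|f_k\|_{A^q(\Omega)},
\]
so $\{f_k\}$ is uniformly bounded on every compact subset of $\Omega$. Montel's theorem (normality of locally uniformly bounded families of holomorphic functions) then yields a subsequence, still denoted $\{f_k\}$, converging locally uniformly to some $f \in \mathcal{O}(\Omega)$. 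Fatou's lemma guarantees $\|f\|_{A^q(\Omega)} \leq M$, in particular $f \in A^q(\Omega) \subset A^p(\Omega)$.

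The heart of the argument is to upgrade local uniform convergence to $A^p$-norm convergence, and this is where the strict inequality $q > p$ is essential. Given $\varepsilon > 0$, choose an exhausting compact set $K \Subset \Omega$ and split
\[
\int_\Omega |f_k - f|^p \, d\lambda = \int_K |f_k - f|^p \, d\lambda + \int_{\Omega \setminus K} |f_k - f|^p \, d\lambda.
\]
On $K$, local uniform convergence makes the first term tend to $0$. For the second, apply Hölder's inequality with exponents $q/p$ and $q/(q-p)$:
\[
\int_{\Omega \setminus K} |f_k - f|^p \, d\lambda \leq \left( \int_{\Omega \setminus K} |f_k - f|^q \, d\lambda \right)^{p/q} |\Omega \setminus K|^{1 - p/q} \leq (2M)^p \, |\Omega \setminus K|^{1 - p/q}.
\]
Since $1 - p/q > 0$, choosing $K$ with $|\Omega \setminus K|$ sufficiently small makes this term $< \varepsilon$ uniformly in $k$, completing the proof.

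The main obstacle I anticipate is precisely the control of the tail integral near $\partial\Omega$: local uniform convergence alone is far too weak, and without the Hölder trick one would be stuck. Everything else (submean estimates, Montel, Fatou) is standard machinery; the compactness of $\tau$ is really a consequence of the elementary but crucial observation that $|\Omega| < \infty$ together with $q > p$ makes the inclusion $L^q \hookrightarrow L^p$ quantitatively better on small sets, which combines with the rigidity (normality) of holomorphic functions to produce compactness.
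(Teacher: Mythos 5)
Your proof is correct and follows essentially the same route as the paper: extract a locally uniformly convergent subsequence via the Bergman/submean-value estimate and Montel's theorem, then control the $L^p$-mass near $\partial\Omega$ by H\"older's inequality with exponents $q/p$ and $q/(q-p)$, using the uniform $L^q$-bound and $|\Omega|<\infty$. The only (cosmetic) difference is that the paper packages the final step as uniform integrability plus the Vitali convergence theorem, whereas you carry out the split $\int_K + \int_{\Omega\setminus K}$ by hand.
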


When $\Omega$ is the unit ball, Zhu \cite{zhu2022embedding} and  Bao-Ma-Yan-Zhu \cite{bmyz} established necessary and sufficient conditions for the compactness of the embedding between weighted Bergman spaces. To handle the unweighted case for a general bounded domain, our approach is different.

\section{Preliminary}\label{sec:preliminary}
Let $\Omega$ be a bounded domain in $\mathbb{C}^n$, and let $\mathcal{O}(\Omega)$ denote the space of holomorphic functions on $\Omega$. For $1 \leq p < \infty$, define
\[
A^p(\Omega) := \left\lbrace f\in\mathcal{O}(\Omega)\ \big|\ \|f\|_p:=\int_\Omega|f|^p<+\infty \right\rbrace  ,
\]
the space of $L^p$-integrable holomorphic functions on $\Omega$. These spaces generalize the classical Bergman space $A^2(\Omega)$, which forms a Hilbert space. However, for $p \neq 2$, the lack of a complete orthonormal basis implies that $A^p(\Omega)$ is no longer a Hilbert space. Nevertheless, as will be established in Proposition \ref{Banach}, $A^p(\Omega)$ is a Banach space.

\begin{Proposition}[Bergman inequality, \texorpdfstring{\cite[Proposition 2.1]{chen2022p}}{}]\label{Bergman inequality}
For any compact set $S\subset\Omega$ and for all $f\in A^p(\Omega)$, there exists a constant $C_{S,\Omega}>0$ such that
$$\sup_S|f|^p\leq C_{S,\Omega}\|f\|^p_p.$$
\end{Proposition}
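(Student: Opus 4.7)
The plan is to exploit the sub-mean value property of $|f|^p$ when $f$ is holomorphic. Because $S\subset\Omega$ is compact and $\Omega$ is open, the distance $d:=\mathrm{dist}(S,\partial\Omega)>0$, so the ball $B(z,d/2)$ is contained in $\Omega$ for every $z\in S$. Since $f$ is holomorphic, the mean value property gives
\[
f(z)=\frac{1}{\mathrm{vol}(B(z,d/2))}\int_{B(z,d/2)}f(w)\diff V(w),
\]
and Jensen's inequality applied to the convex function $t\mapsto t^p$ on $[0,\infty)$ (valid for $p\geq 1$) yields
\[
\abs{f(z)}^p\leq\frac{1}{\mathrm{vol}(B(z,d/2))}\int_{B(z,d/2)}\abs{f(w)}^p\diff V(w).
\]

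Since $B(z,d/2)\subset\Omega$, the integral on the right is bounded by $\norm{f}_p^p$, and $\mathrm{vol}(B(z,d/2))$ depends only on $d$ and $n$, not on $z\in S$. Taking the supremum over $z\in S$ therefore produces the claimed estimate with
\[
C_{S,\Omega}=\frac{1}{\mathrm{vol}(B(0,d/2))}.
\]

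There is no real obstacle here; the only point requiring care is the justification of the pointwise estimate, for which I would either invoke Jensen as above or note that $\abs{f}^p$ is plurisubharmonic (being the composition of the convex increasing map $t\mapsto e^{pt}$ with the plurisubharmonic function $\log\abs{f}$) and apply the sub-mean value inequality for plurisubharmonic functions directly, which also covers the case $0<p<1$ if needed. Either route gives a constant depending only on $S$, $\Omega$, and $n$, as required.
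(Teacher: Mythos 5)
Your argument is correct and is essentially the standard one: the paper itself only cites this inequality from Chen--Zhang \cite[Proposition 2.1]{chen2022p}, whose proof is exactly the sub-mean value property of $|f|^p$ on balls $B(z,d/2)\subset\Omega$ with $d=\mathrm{dist}(S,\partial\Omega)$, giving the uniform constant $C_{S,\Omega}=\mathrm{vol}(B(0,d/2))^{-1}$. Your remark that plurisubharmonicity of $|f|^p$ extends the estimate to $0<p<1$ is also accurate, though not needed here since the paper assumes $p\geq 1$.
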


\begin{Proposition}[\texorpdfstring{\cite[Proposition 2.2]{chen2022p}}{}]\label{Banach}
$A^p(\Omega)$ is a Banach space for $p\geq1$.
\end{Proposition}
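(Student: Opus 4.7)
The plan is to show that $A^p(\Omega)$ is a closed subspace of $L^p(\Omega)$; since $L^p(\Omega)$ is known to be a Banach space and the norm on $A^p(\Omega)$ is inherited, completeness of $A^p(\Omega)$ will follow immediately. The norm axioms transfer from $L^p(\Omega)$ with one caveat: definiteness requires $\|f\|_p=0 \Rightarrow f\equiv 0$, which holds because a holomorphic function vanishing almost everywhere in $\Omega$ must vanish identically by the identity principle.

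For the closedness, I would take a sequence $\{f_n\}\subset A^p(\Omega)$ converging in $L^p(\Omega)$ to some $f\in L^p(\Omega)$, and show that $f$ has a holomorphic representative. The crucial tool is the Bergman inequality (Proposition \ref{Bergman inequality}): for any compact $S\subset\Omega$,
\[
\sup_S |f_n - f_m|^p \leq C_{S,\Omega} \|f_n - f_m\|_p^p.
\]
Since $\{f_n\}$ is Cauchy in $A^p(\Omega)$ (being convergent in $L^p$), this estimate promotes the $L^p$-Cauchy condition to uniform Cauchyness on every compact subset of $\Omega$. By a standard exhaustion of $\Omega$ by compact sets, $\{f_n\}$ converges locally uniformly to some function $g$ on $\Omega$.

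By the Weierstrass theorem, a locally uniform limit of holomorphic functions is holomorphic, so $g\in\mathcal{O}(\Omega)$. On the other hand, locally uniform convergence implies $L^p$-convergence on each compact subset, so $g=f$ almost everywhere on $\Omega$. Thus $f$ admits the holomorphic representative $g$, and $f\in A^p(\Omega)$, proving that $A^p(\Omega)$ is closed in $L^p(\Omega)$.

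The argument is essentially routine once the Bergman inequality is in hand; the only conceptual point is recognizing that the pointwise control provided by Proposition \ref{Bergman inequality} is exactly what bridges $L^p$-convergence and the locally uniform convergence needed to invoke Weierstrass. There is no significant obstacle beyond being careful that the modification of $f$ on a null set produces a holomorphic function genuinely lying in $A^p(\Omega)$.
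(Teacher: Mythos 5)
Your proof is correct, and it follows the same overall strategy as the paper (reduce to showing $A^p(\Omega)$ is closed in $L^p(\Omega)$, with the Bergman inequality as the key tool), but the mechanism for producing the holomorphic limit is genuinely different. The paper applies the Bergman inequality to each $f_j$ individually to get local uniform \emph{boundedness}, invokes Montel's theorem to extract a locally uniformly convergent subsequence, and then needs Fatou's lemma twice --- once to show the limit lies in $A^p(\Omega)$ and once to identify it with the $L^p$-limit. You instead apply the Bergman inequality to the \emph{differences} $f_n-f_m$, which upgrades the $L^p$-Cauchy condition directly to uniform Cauchyness on compacta; the whole sequence then converges locally uniformly, Weierstrass gives holomorphy, and the identification $g=f$ a.e.\ follows by comparing the two modes of convergence on each compact set. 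Your route is slightly more economical: no subsequence extraction, no Montel, no Fatou, and membership of $g$ in $A^p(\Omega)$ is free since $g=f$ a.e.\ and $f\in L^p(\Omega)$. The paper's normal-families argument has the advantage of being the template one reuses later (in the proof of Theorem \ref{th:compact}) where only boundedness, not Cauchyness, is available --- but for this proposition your Cauchy argument is cleaner.
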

\begin{proof}
It suffices to show that $A^p(\Omega)$ is a closed subspace of $L^p(\Omega)$.

Let $\{f_j\} \subset A^p(\Omega)$ be a sequence in $L^p(\Omega)$ with $f_j \to f_0 \quad \text{in } L^p(\Omega) \text{ as } j \to \infty$. From the result of Proposition \ref{Bergman inequality}, the sequence $\{f_j\}$ is uniformly bounded on compact subsets of $\Omega$, which implies that $\{f_j\}$ forms a normal family. It follows from Montel's theorem that there exists a subsequence $\{f_{j_k}\}$ converging locally uniformly to some $\hat{f}_0 \in \mathcal{O}(\Omega)$. Fatou's lemma yields
\[
\int_\Omega |\hat{f}_0|^p \, dV \le \limsup_{k \to \infty} \|f_{j_k}\|_p^p < \infty,
\]
so that $\hat{f}_0 \in A^p(\Omega)$, and
\begin{align*}
\|f_{j_k} - \hat{f}_0\|_p 
&= \left( \int_\Omega \liminf_{m \to \infty} |f_{j_k} - f_{j_m}|^p \, dV \right)^{\frac{1}{p}} \\
&\le \liminf_{m \to \infty} \left( \int_\Omega |f_{j_k} - f_{j_m}|^p \, dV \right)^{\frac{1}{p}} \\
&= \liminf_{m \to \infty} \|f_{j_k} - f_{j_m}\|_p \\
&\le \liminf_{m \to \infty} \left( \|f_{j_k} - f_0\|_p + \|f_{j_m} - f_0\|_p \right) \\
&= \|f_{j_k} - f_0\|_p.
\end{align*}
It follows that $f_{j_k} \to \hat{f}_0$ in $L^p(\Omega)$. Thus, $f_0 = \hat{f}_0$ a.e. on $\Omega$, i.e., $f_0\in A^p(\Omega)$.
\end{proof}

The following elementary result in functional analysis plays an important role in proving our main theorems.

\begin{Theorem}[\texorpdfstring{\cite[pp.104]{rudin1991functional}}{}]\label{functional analysis}
For any Banach space $B$, the identity operator $I: B \rightarrow B$ is compact if and only if $B$ is finite-dimensional.
\end{Theorem}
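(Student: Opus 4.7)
The plan is to prove both directions of the equivalence separately, with the harder direction relying on Riesz's lemma.

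For the easier direction (finite-dimensionality of $B$ implies compactness of $I$), I would invoke the standard fact that all norms on a finite-dimensional vector space are equivalent, so $B$ is linearly homeomorphic to $\mathbb{C}^n$ (as a real vector space, to $\mathbb{R}^{2n}$) for some $n$. By the Heine--Borel theorem, the closed unit ball of $\mathbb{C}^n$ is compact, and this property transfers to $B$ through the homeomorphism. Hence $I$ sends the closed unit ball of $B$ to itself, a compact set, so $I$ is compact by definition.

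For the harder direction (compactness of $I$ implies $\dim B < \infty$), I would argue by contraposition. Assuming $B$ is infinite-dimensional, the aim is to construct inductively a sequence $\{x_n\} \subset B$ with $\|x_n\| = 1$ for all $n$ and $\|x_n - x_m\| \geq 1/2$ for all $n \neq m$. Such a sequence has no Cauchy subsequence, hence no convergent subsequence. Since $I(x_n) = x_n$, this would contradict the compactness of $I$, which demands that the bounded sequence $\{x_n\}$ admit a convergent subsequence.

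The construction of $\{x_n\}$ proceeds by Riesz's lemma: for any proper closed subspace $Y \subsetneq B$ and any $\theta \in (0,1)$, there exists $x \in B$ with $\|x\| = 1$ and $\mathrm{dist}(x, Y) \geq \theta$. The proof of the lemma itself is a short geometric argument: pick $z \in B \setminus Y$, set $d := \mathrm{dist}(z, Y) > 0$, choose $y_0 \in Y$ with $\|z - y_0\| \leq d/\theta$, and verify that $x := (z - y_0)/\|z - y_0\|$ satisfies $\mathrm{dist}(x, Y) \geq \theta$. Applying Riesz's lemma iteratively with $\theta = 1/2$ and $Y_n := \mathrm{span}\{x_1, \ldots, x_{n-1}\}$ produces the desired sequence, using the facts that each $Y_n$ is finite-dimensional (hence closed in $B$) and proper (since $B$ is infinite-dimensional).

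The main obstacle, or rather the step carrying the essential geometric content, is Riesz's lemma together with the observation that finite-dimensional subspaces of a normed space are automatically closed. Once these are in place, the inductive construction and the ensuing compactness argument are straightforward.
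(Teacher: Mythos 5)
Your proof is correct and complete. Note, however, that the paper does not prove this statement at all: it is quoted as a standard fact with a citation to Rudin's \emph{Functional Analysis}, so there is no in-paper argument to compare against. Your route --- Riesz's lemma with $\theta = 1/2$ to build a sequence of unit vectors that is uniformly separated, hence without convergent subsequence --- is one of the two classical proofs. The source cited by the paper proceeds slightly differently: Rudin deduces the hard direction from the theorem that a locally compact topological vector space is finite-dimensional (via a covering of the compact unit ball by finitely many translates of half the ball, followed by an iteration showing the ball lies in a finite-dimensional subspace). Both arguments need the same two supporting facts you identify (finite-dimensional subspaces are closed; norms on finite-dimensional spaces are equivalent), and your version is arguably the more elementary and self-contained of the two. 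All the steps you outline --- the Heine--Borel direction, the statement and proof sketch of Riesz's lemma, and the inductive construction with $Y_n = \mathrm{span}\{x_1,\ldots,x_{n-1}\}$ proper and closed --- are accurate as written.
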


Let $E$ be a closed subspace of $A^p(\Omega)$, which is Banach space equipped with the $L^p$-norm. Suppose that $E\subset A^q(\Omega)$. To prove the finite-dimensionality of $E$, it suffices to show that the natural inclusion operator
\begin{equation}\label{eq:natural_inclusion}
j: (E, \|\cdot\|_p) \longrightarrow A^p(\Omega),\ \ \ f\mapsto f
\end{equation}
is compact, in view of Theorem \ref{functional analysis}. We shall make use the Vitali convergence theorem, which is related to the concept of uniform integrability.

\begin{definition}[Uniform Integrability]
Let $(X,\mu)$ be a measure space, $1\leq p<+\infty$. Let $\{f_n\} \subset L^p(X,\mu)$ be a sequence of integrable functions. The sequence $\{f_n\}$ is said to be uniformly integrable if for every $\varepsilon>0$, there exists $\delta>0$ such that for any measurable set $E \subset X$ with $\mu(E)<\delta$, one has
\[
\sup_{n} \int_E |f_n|^p\, d\mu < \varepsilon.
\]
\end{definition}

\begin{Theorem}[ Vitali Convergence Theorem, \texorpdfstring{\cite[Theorem 16.6]{Schilling2005}}{}]
Let $\{f_n\} \subset L^p(\mu)$ be a uniformly integrable sequence of integrable functions that converges almost everywhere to a function $f$. Then $f \in L^p(\mu)$ and
\[
\lim_{n \to \infty} \int_X |f_n - f|^p\, d\mu = 0.
\]
\end{Theorem}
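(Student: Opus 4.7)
The plan is to use the classical two-step approach to Vitali's theorem: first establish $f \in L^p(\mu)$, and then prove $L^p$-norm convergence by splitting the integral into a small-measure piece controlled by uniform integrability and a large-measure piece controlled by Egorov's theorem. Because this theorem is invoked in the paper only on the bounded domain $\Omega \subset \mathbb{C}^n$, and because the cited Schilling formulation is stated for that setting, I will tacitly assume $\mu(X) < \infty$; without such an assumption the given definition of uniform integrability is not by itself strong enough, and a tightness hypothesis would need to be added.

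First I would show $f \in L^p(\mu)$ by verifying that $\{f_n\}$ is bounded in $L^p$. Applying uniform integrability with $\varepsilon = 1$ furnishes some $\delta > 0$; partitioning $X$ into finitely many measurable pieces each of measure below $\delta$ yields $\sup_n \|f_n\|_p^p \le N$ for a finite $N$ depending only on $\delta$ and $\mu(X)$. Fatou's lemma, applied to the a.e.\ convergence $|f_n|^p \to |f|^p$, then gives $\int_X |f|^p \, d\mu \le \liminf_n \|f_n\|_p^p < \infty$.

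For the norm convergence, fix $\varepsilon > 0$ and choose $\delta > 0$ so that $\mu(E) < \delta$ implies $\sup_n \int_E |f_n|^p \, d\mu < \varepsilon / (3 \cdot 2^{p-1})$; a Fatou argument transfers the same bound to $|f|^p$ on all such $E$. Egorov's theorem then produces a set $F \subset X$ with $\mu(F) < \delta$ on whose complement $f_n \to f$ uniformly. Splitting
\[
\int_X |f_n - f|^p \, d\mu = \int_F |f_n - f|^p \, d\mu + \int_{X \setminus F} |f_n - f|^p \, d\mu,
\]
the first integral is controlled uniformly in $n$ via $|a-b|^p \le 2^{p-1}(|a|^p + |b|^p)$ and the two estimates just obtained, while the second tends to zero by uniform convergence on the finite-measure set $X \setminus F$.

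The main subtlety lies in the \emph{order of quantifiers}: the threshold $\delta$ must be selected first, using only uniform integrability, and Egorov must be applied afterward to produce a set $F$ of matching small measure. Once that order is respected, the two halves of the split decouple cleanly: uniform integrability defeats $\int_F$, and uniform convergence defeats $\int_{X \setminus F}$. Finiteness of $\mu(X)$ is used twice, once to partition $X$ into finitely many small pieces in the boundedness step and once to deploy Egorov; everything else is routine bookkeeping.
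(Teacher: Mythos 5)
The paper offers no proof of this statement: it is quoted as Theorem 16.6 of Schilling's book \cite{Schilling2005} and used as a black box in the proof of Theorem \ref{th:compact}, so there is no in-paper argument to compare yours against. Your proof is the standard Egorov-based one and is correct in the setting where the paper actually applies it. You are right to flag the finite-measure issue: with uniform integrability defined only via sets of small measure, the statement as written fails on infinite measure spaces without an additional tightness hypothesis, and since the paper only invokes the result for Lebesgue measure on a bounded $\Omega\subset\C^n$, your tacit assumption $\mu(X)<\infty$ is the appropriate repair. The quantifier discipline you emphasize --- fix $\delta$ from uniform integrability first, then invoke Egorov to produce $F$ with $\mu(F)<\delta$ --- is indeed the crux, and the $2^{p-1}$ convexity bound together with the Fatou transfer of the small-set estimate to $|f|^p$ closes the argument cleanly. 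The only step deserving a second look is the partition of $X$ into finitely many pieces of measure below $\delta$ in the boundedness step: for a general finite measure this requires non-atomicity (or treating atoms of mass at least $\delta$ separately, where a.e.\ convergence bounds the values directly), but for Lebesgue measure on $\Omega$ it is immediate, so no real gap remains.
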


\section{Proof of Theorem \ref{th:Gro1} and Theorem \ref{th:compact}}

\begin{proof}[Proof of Theorem \ref{th:compact}]
Let $\{f_k\} \subset A^q(\Omega)$ be a bounded sequence and $M > 0$ a constant such that
\[
\sup_k \|f_k\|_q \leq M.
\]
By H\"older's inequality, it follows that
\begin{align*}
\int_{\Omega} |f_k|^p &\leq \left( \int_{\Omega} 1 \right)^{1 - p/q} \left( \int_{\Omega} |f_k|^q \right)^{p/q}\\
&\leq \left( \int_{\Omega} 1 \right)^{1 - p/q} \cdot\|f_k\|^p_q\\
&\leq |\Omega|^{1-p/q}\|f_k\|_q^p,
\end{align*}		
i.e.,
\[
\sup_k \|f_k\|_p \leq |\Omega|^{1/p-1/q}\|f_k\|_q\leq CM.
\]
Here, $C$ is a constant depending on $p,q$ and the volume of $\Omega$. On the other hand, for any fixed compact subset $K \subset\subset \Omega$, the Bergman inequality yields
\[
\sup_{z \in K} |f_k(z)| \leq C_{K,\Omega}^{1/q} \|f_k\|_q \leq C_{K,\Omega}^{1/q} M, \quad \forall\ k,
\]
where $C_{K,\Omega}>0$ depends only on $K$ and $\Omega$. Thus $\{f_k\}$ is locally uniformly bounded on $\Omega$, and Montel's theorem implies that it forms a normal family. Consequently, there exists a subsequence $\{f_{k_j}\} = \{g_j\}$ that converges uniformly on compact subsets of $\Omega$ to some $g \in \mathcal{O}(\Omega)$.

In order to show convergence in the sense of $L^p$-norm, we invoke Vitali's convergence theorem. Since $\sup_j \|g_j\|_q \leq M$, for any measurable set $E \subset \Omega$, H\"older's inequality gives
\[
\int_E |g_j|^p \leq |E|^{1-p/q}\left(\int_E |g_j|^q\right)^{p/q}\leq V(E)^{1-p/q}M^p.
\]
Thus, for any $\varepsilon > 0$, if $V(E)$ is sufficiently small, then $\int_E |g_j|^p < \varepsilon$, uniformly in $j$. By Vitali's convergence theorem, we conclude that
\[
\|g_j - g\|_p \to 0 \quad \text{as } j \to \infty,
\]
showing that $g_j \to g$ in $A^p(\Omega)$, which proves that $\tau$ is compact.		
\end{proof}

Note that the natural inclusion operator $j$ defined in \eqref{eq:natural_inclusion} can be decomposed as follows:
\begin{equation}\label{eq:inclusion_decomposition}
(E,\|\cdot\|_p)\xrightarrow{i} A^q(\Omega)\xrightarrow{\tau} A^p(\Omega).
\end{equation}
Here, $i$ denotes the inclusion operator from $E$ into $A^q(\Omega)$, where $A^q(\Omega)$ is endowed with the $L^q$-norm. Next, we shall prove

\begin{lemma}\label{lm:i-continuity}
The inclusion operator
\[
i : (E,\|\cdot\|_p) \longrightarrow A^q(\Omega), \qquad i(f) = f
\]
is a bounded linear operator.
\end{lemma}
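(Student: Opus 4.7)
The plan is to invoke the closed graph theorem. First I would observe that $(E,\|\cdot\|_p)$ is a Banach space: it is a closed subspace of $A^p(\Omega)$ by hypothesis, and $A^p(\Omega)$ is itself Banach by Proposition \ref{Banach}. The target $A^q(\Omega)$ is likewise Banach by the same proposition, and $i$ is trivially linear, so boundedness reduces to checking that the graph of $i$ is closed.

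To verify closedness of the graph, I would take a sequence $\{f_n\}\subset E$ with $f_n\to f$ in $\|\cdot\|_p$ and $i(f_n)=f_n\to g$ in $\|\cdot\|_q$, and aim to conclude $f=g$ almost everywhere. The cleanest route uses boundedness of $\Omega$: H\"older's inequality (exactly as in the opening estimate of the proof of Theorem \ref{th:compact}) gives
\[
\|f_n-g\|_p\ \leq\ |\Omega|^{1/p-1/q}\,\|f_n-g\|_q\ \longrightarrow\ 0,
\]
so $f_n\to g$ in $L^p(\Omega)$ as well. Combined with $f_n\to f$ in $L^p(\Omega)$ and the uniqueness of the $L^p$-limit, this forces $f=g$ a.e.\ on $\Omega$; hence $i(f)=g$ as elements of $A^q(\Omega)$.

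The main (and really only) conceptual point is that the hypothesis $E\subset A^q(\Omega)$ is purely set-theoretic and carries no a priori comparison between the $L^p$- and $L^q$-norms on $E$. The closed graph theorem is precisely the mechanism that upgrades this qualitative containment into the quantitative estimate $\|f\|_q\leq C\|f\|_p$ for all $f\in E$. Once Lemma \ref{lm:i-continuity} is established, the inclusion $j$ in \eqref{eq:inclusion_decomposition} factors as the composition $\tau\circ i$ of a bounded operator with the compact operator $\tau$ of Theorem \ref{th:compact}, hence is itself compact; Theorem \ref{functional analysis} then immediately yields Theorem \ref{th:Gro1}.
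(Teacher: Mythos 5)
Your proof is correct and follows the same overall strategy as the paper: both reduce the statement to the closed graph theorem, using that $(E,\|\cdot\|_p)$ is Banach (as a closed subspace of the Banach space $A^p(\Omega)$) and that $A^q(\Omega)$ is Banach. The only genuine difference is how the closedness of the graph is verified. The paper applies the Bergman inequality twice to show that $f_j$ converges locally uniformly to both $f$ and $g$, and concludes $f\equiv g$ pointwise; you instead use H\"older's inequality on the bounded domain to get $\|f_n-g\|_p\leq|\Omega|^{1/p-1/q}\|f_n-g\|_q\to 0$ and then invoke uniqueness of $L^p$-limits. Your route is slightly more economical and, notably, makes no use of holomorphy at all: it shows that the closed-graph step works for any subspace of $L^p$ of a finite measure space that happens to sit inside $L^q$, whereas the paper's version leans on the sup-norm control special to Bergman spaces. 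Both are perfectly valid here, and your closing remarks correctly identify the role of the lemma in the factorization $j=\tau\circ i$.
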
	
\begin{proof}
Note that both $(E, \|\cdot\|_p)$ and $A^q(\Omega)$ are Banach spaces. Consider the linear operator
\[
i : E \longrightarrow A^q(\Omega), \quad i(f) = f,
\]
whose graph is given by
\[
\{(f, f)\ \big|\  f \in E\}.
\]

We now verify that the graph of $i$ is closed, that is, if $f_j\in E$ with $f_j\to f$ in the sense of $L^p$-norm and $i(f_j)=f_j\to g$ in the sense of $L^q$-norm, then $f=g$. More precisely, $\{f_j\} \subset E$ satisfies
\[
\|f_j - f\|_p \to 0 \quad \text{and} \quad \|f_j - g\|_q \to 0 \quad \text{as } j \to \infty,
\]
for some $f \in E$ and $g \in A^q(\Omega)$. Then, for any compact subset $K \subset \Omega$, the Bergman inequality yields
\begin{align*}
\sup_{z \in K} |f_j(z) - f(z)| &\le C_{K,\Omega}^{1/p} \|f_j - f\|_p,\\
\sup_{z \in K} |f_j(z) - g(z)| &\le C_{K,\Omega}^{1/q} \|f_j - g\|_q,
\end{align*}
where $C_K > 0$ depends only on $K$. Letting $j \to \infty$, we conclude that $\{f_j\}$ converges locally uniformly both to $f$ and to $g$, which implies $f = g$ on $K$. Since $K$ is arbitrary, it follows that $f \equiv g$ on $\Omega$.

Since the graph of $i$ is closed, the closed graph theorem implies that $i$ is bounded.
\end{proof}
	
\begin{proof}[Proof of Theorem \ref{th:Gro}]
Since the composition of compact operator with a bounded operator is compact, we infer from \eqref{eq:inclusion_decomposition}, Theorem \ref{th:compact} and Lemma \ref{lm:i-continuity} that $j$ is compact. The conclusion follows directly from the Theorem \ref{functional analysis}.
\end{proof}


\end{document}